\theoremstyle{plain}
\newtheorem{theorem}{Theorem}
\newtheorem{corollary}[theorem]{Corollary}
\newtheorem{lemma}[theorem]{Lemma}
\theoremstyle{definition}
\newtheorem{definition}[theorem]{Definition}
\renewcommand{\theta}{\vartheta}
\renewcommand{\phi}{\varphi}
\newif\ifshowproofs
\newcommand{\CD}{\mathbf{CD}}
\newcommand{\G}{\mathbf{G}}
\renewcommand{\a}{\overline{a}}
\newcommand{\x}{\overline{x}}
\newcommand{\forces}{\Vdash}
\begin{document}
\title{An interpolant in predicate  Gödel logic}
\author{Matthias Baaz$^1$ \ \href{mailto:baaz@logic.at}{baaz@logic.at}, \\
Mai Gehrke$^2$ \ \href{mailto:mgehrke@unice.fr}{mgehrke@unice.fr},\\
 Sam van Gool$^3$ \ \href{mailto:samvangool@me.com}{samvangool@me.com}}
\affil{$^1$ Institute of Discrete Mathematics and Geometry, TU Wien, Austria \\
\hspace*{-2.4cm}$^2$ Laboratoire J. A. Dieudonn\'e, CNRS {\&} UNSA, France\\
\hspace*{-1.8cm}$^3$ Institute for Logic, Language and Computation, UvA, NL}


\maketitle

\begin{abstract}
A logic satisfies the interpolation property provided that whenever a formula $\Delta$ is a consequence of another formula $\Gamma$, then this is witnessed by a formula $\Theta$ which only refers to the language common to $\Gamma$ and $\Delta$. That is, the relational (and functional) symbols occurring in $\Theta$ occur in both $\Gamma$ and $\Delta$, $\Gamma$ has $\Theta$ as a consequence, and $\Theta$ has $\Delta$ as a consequence. Both classical and intuitionistic predicate logic have the interpolation property, but it is a long open problem which intermediate predicate logics enjoy it. In 2013 Mints, Olkhovikov, and Urquhart showed that constant domain intuitionistic logic does not have the interpolation property, while leaving open whether predicate G\"odel logic does. In this short note, we show that their counterexample for constant domain intuitionistic logic does admit an interpolant in predicate G\"odel logic. While this has no impact on settling the question for predicate G\"odel logic, it lends some credence to a common belief that it does satisfy interpolation. Also, our method is based on an analysis of the semantic tools of Olkhovikov and it is our hope that this might eventually be useful in settling this question.
\end{abstract}

\section{Introduction}

The interpolation property has been studied intensively over the years for both predicate and propositional logics. Indeed, interpolation was proved for classical predicate logic by Craig \cite{Cr57} in 1957 and for intuitionistic predicate logic by Schütte \cite{Sc62}. In 1977  Maksimova \cite{Ma77} completely solved the interpolation problem for intermediate propositional logics by showing that exactly 7 of these logics have the interpolation property. Her work uses the algebraic semantics available for these propositional logics. In the setting of predicate logics, algebraic semantics are not as well understood and the question still remains open for many intermediate predicate logics despite the fact that the question has been actively pursued. Recent advances on the subject include the 2013 paper by Mints, Olkhovikov, and Urquhart \cite{MOU13} in which they show that constant domain intuitionistic logic does not have the interpolation property and the very recent contribution \cite{BaLo17} showing, among other, that constant domain intermediate logics based on finite algebras of truth values as well as some fragments of G\"odel logic do have the interpolation property. However, the question remains open for full predicate G\"odel logic, the logic of all linearly ordered constant domain Kripke models.

Consider the following two formulas:
\begin{align*}
\Gamma &: \forall x \exists y (Py \wedge (Qy \to Rx)) \wedge \neg \forall x Rx, \\
\Delta &: \forall x (Px \to (Qx \vee S)) \to S.
\end{align*}
It was proved in \cite{MOU13} that $\Gamma \to \Delta$ is valid in constant domain intuitionistic predicate logic ($\CD$), but that there does not exist an interpolant for $\Gamma \to \Delta$ in the common language containing only the predicate symbols $P$ and $Q$.

We will show that the example from \cite{MOU13} does not provide a counterexample to interpolation for predicate Gödel logic, $\G$. In particular, we show that
\begin{align*}
\Theta &: \forall x (\neg Px \vee \exists y (Py \wedge (Qy \to Px))) \wedge \neg \forall x (\neg Px \vee Qx)
\end{align*}
is an interpolant for $\Gamma \to \Delta$ in $\G$. That is, we will show (Theorem~\ref{thm:main}) that $\Gamma \to \Theta$ and $\Theta \to \Delta$ are both valid in $\G$. (In fact, we will show that $\Theta \to \Delta$ is even valid in $\CD$.)

Our interpolant came about by extracting a formula from our analysis of the proof of the counterexample in \cite{MOU13}, showing that a pair of countermodels such as those exhibited in \cite{MOU13} necessarily involve models which are \emph{not} linearly ordered. One may hope that a generalisation of these ideas, along with a strengthening of the completeness result for the semantic tools of Olkhovikov \cite{Ol14} may provide a route to a resolution of this longstanding question.

\section{Semantics}
We recall the constant domain semantics for $\CD$ and $\G$.
\begin{definition}
Let $\mathcal{L}$ be a finite set of unary predicate symbols.\footnote{We only need to consider unary predicates in this note.} A \emph{model} (over $\mathcal{L}$) is a tuple $M = (W,\leq,w_0,A,(P^W)_{P \in \mathcal{L}})$, where $(W,\leq)$ is a quasi-order, the \emph{base point} $w_0$ is an element of $W$ such that $w_0 \leq w$ for all $w \in W$, $A$ is a set, and for each $P \in \mathcal{L}$, $P^W$ is an order-preserving function from $W$ to $\mathcal{P}(A)$, i.e., if $w \leq w'$ in $W$, then $P^W(w) \subseteq P^W(w')$. We will often suppress the superscript $W$ in $P^W$ when no confusion can arise.

For any model $M = (W,\leq,w_0,A,(P^W))$ and finite set of variables $X$, the \emph{forcing relation}, $\forces$, at a world $w$ and an assignment $\a \colon X \to A$, is defined by induction on the complexity of formulas $\phi$ all of whose free variables lie in $X$, as follows:
\begin{itemize}
\item $w, \a \forces Px$ iff $\a(x) \in P^W(w)$;
\item $w, \a \forces \phi \vee \psi$ iff $w, \a \forces \phi$ or $w, \a \forces \psi$;
\item $w, \a \forces \phi \wedge \psi$ iff $w, \a \forces \phi$ and $w, \a \forces \psi$;
\item $w, \a \forces \phi \to \psi$ iff for all $w' \geq w$, if $w', \a \forces \phi$ then $w', \a \forces \psi$;
\item $w, \a \not\forces \bot$;
\item $w, \a \forces \exists y \phi$, where $y\not\in X$, if and only if there exists $b \in A$ such that $w, \a \cup \{(y,b)\} \forces \phi$;
\item $w, \a \forces \forall y \phi$, where $y\not\in X$, if and only if for all $b \in A$, we have $w, \a \cup \{(y,b)\} \forces \phi$.
\end{itemize}
As usual, $\neg \phi$ is regarded as an abbreviation of $\phi \to \bot$, so that the semantic clause becomes:
\begin{itemize}
\item $w, \a \forces \neg \phi$ iff for all $w' \geq w$, $w', \a \not\forces \phi$.
\end{itemize}

By a slight abuse of notation, if $\phi(\x)$ is a formula, and $\a \colon \x \to A$ is an assignment, then we also write $w \forces \phi(\a)$ instead of $w, \a \forces \phi(\x)$. For example, if $a \in A$ and $w \in W$, then $w \forces Pa$ means $a \in P^W(w)$.

A \emph{linear model} is a model such that, for any two formulas $\phi$, $\psi$, the instance of the scheme $(\phi \to \psi) \vee (\psi \to \phi)$ is forced in the base point $w_0$ under every assignment. Up to logical equivalence of models, this means that we may assume that the quasi-order $\leq$ is total.
\end{definition}
It is straight forward to establish that any model is \emph{persistent}: for any formula $\phi(\x)$ and assignment $\a \colon \x \to A$, if $w \leq w'$ and $w,\a \forces \phi$ then $w',\a \forces \phi$. Therefore, if a formula is forced in the base point, it is forced everywhere in the model. We will use the following completeness theorem for $\G$ \cite{Co92}.
\begin{theorem}\label{thm:completeness}
For any sentence $\phi$ using only predicate symbols from $\mathcal{L}$, the following are equivalent:
\begin{enumerate}
\item the sentence $\phi$ is valid in $\G$;
\item for any linear model $M$ with base point $w_0$, we have $w_0 \forces \phi$.
\end{enumerate}
\end{theorem}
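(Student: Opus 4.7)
The plan is to prove the two directions separately. The soundness direction (1 implies 2) is routine: by induction on a Hilbert-style derivation in $\G$, each axiom, including the prelinearity axiom $(\phi \to \psi) \vee (\psi \to \phi)$ and the constant domain axiom $\forall x(\phi(x) \vee \psi) \to (\forall x \phi(x) \vee \psi)$ (with $x$ not free in $\psi$), is forced at the base point of every linear constant-domain model, and the rules modus ponens and generalization preserve this. The persistence property recalled just before the statement, together with the totality of $\leq$, handle the semantic clause for implication.

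The main effort goes into the completeness direction (2 implies 1). Assume $\phi$ is not provable in $\G$. I would use a Henkin-style canonical model construction. First, extend the language with a countable set $C$ of fresh constants and, by a Lindenbaum-style enumeration, build a theory $T$ in the extended language containing $\neg \phi$ that is consistent, prime (so that $T \vdash \alpha \vee \beta$ implies $T \vdash \alpha$ or $T \vdash \beta$), and Henkin-complete (so that $T \vdash \exists x \psi(x)$ implies $T \vdash \psi(c)$ for some $c \in C$). Then take $W$ to be the set of all prime, Henkin-complete theories $T'$ extending $T$, ordered by inclusion, with $T$ itself as base point $w_0$, let $A := C$, and define $P^W(T') := \{c \in C \mid Pc \in T'\}$, which is order-preserving by construction. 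A standard truth lemma proved by induction on formula complexity then yields $T' \forces \psi(\overline{c})$ iff $\psi(\overline{c}) \in T'$; applied at $w_0$ to $\neg \phi$, this gives the required linear constant-domain countermodel.

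The main obstacle is verifying linearity of $W$ and verifying the $\forall$-clause of the truth lemma under the constant-domain requirement. Linearity uses prelinearity crucially: given incomparable prime extensions $T_1, T_2$ with witnesses $\alpha \in T_1 \setminus T_2$ and $\beta \in T_2 \setminus T_1$, the axiom $(\alpha \to \beta) \vee (\beta \to \alpha)$ lies in both, and primeness forces one of the disjuncts into each, contradicting the choice of $\alpha,\beta$. The $\forall$-clause is the delicate point: a prime extension $T'$ could in principle force $\forall x \psi(x)$ without containing it syntactically, but this is ruled out by combining the constant-domain axiom with the Henkin property of $T'$, which together give $\forall x \psi(x) \in T'$ iff $\psi(c) \in T'$ for every $c \in C$. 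Once these two points are in place, the remaining inductive cases of the truth lemma are standard and the theorem follows.
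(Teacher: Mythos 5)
First, a point of comparison: the paper does not prove Theorem~\ref{thm:completeness} at all — it is quoted from Corsi \cite{Co92} — so your proposal is being measured against a substantial result from the literature rather than against an argument in the paper. Your overall strategy (soundness by induction on derivations; completeness via a Henkin-style canonical model of prime, saturated theories over a fixed set of fresh constants) is indeed the standard route, and the soundness half is fine. The completeness half, however, has concrete gaps as written.

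First, the starting point fails: from ``$\phi$ is not provable in $\G$'' you cannot pass to a consistent theory containing $\neg\phi$. Take $\phi = p \vee \neg p$: it is not provable in $\G$ (refuted in a two-point linear model where $p$ holds only at the top), yet $\neg(p \vee \neg p)$ is already intuitionistically inconsistent. The correct move is to build a prime, saturated $T$ with $T \not\vdash \phi$ and conclude $w_0 \not\forces \phi$ from the truth lemma, not to put $\neg\phi$ into $T$. Second, the linearity argument is misdirected: from primeness of $T_1$ and $T_2$ you only obtain $\beta \to \alpha \in T_1$ and $\alpha \to \beta \in T_2$, which is not a contradiction. What you need is primeness of a \emph{common lower bound}: $(\alpha\to\beta)\vee(\beta\to\alpha)$ lies in the prime root $T \subseteq T_1 \cap T_2$, so one disjunct lies in both $T_1$ and $T_2$, and either choice contradicts $\alpha \in T_1 \setminus T_2$, $\beta \in T_2 \setminus T_1$. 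Third, and most seriously, the $\forall$-clause is not discharged. The existential Henkin property together with the CD axiom do \emph{not} yield ``$\psi(c) \in T'$ for all $c \in C$ implies $\forall x\, \psi \in T'$'': there exist prime, existentially Henkin theories that are not $\omega$-complete in this sense, and if $W$ is literally the set of all prime Henkin extensions of $T$, the truth lemma breaks at $\forall$. One must restrict $W$ to $\omega$-complete theories and then reprove the existence lemma for $\to$ (given $T' \not\vdash \alpha \to \beta$, produce a prime, Henkin, $\omega$-complete $T'' \supseteq T' \cup \{\alpha\}$ over the \emph{same} constants $C$ with $\beta \notin T''$); it is precisely here that the constant-domain axiom does its work, and this simultaneous saturation is the technical core of Corsi's proof, which your sketch leaves unaddressed.
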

\begin{corollary}\label{cor:completeness}
Suppose that, for any linear model $M$ with base point $w_0$ such that $w_0 \forces \phi$, we have $w_0 \forces \psi$. Then $\phi \to \psi$ is valid in $\G$.
\end{corollary}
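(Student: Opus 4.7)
The plan is to deduce the corollary directly from Theorem~\ref{thm:completeness} by a standard \emph{generated submodel} argument. By Theorem~\ref{thm:completeness}, proving that $\phi \to \psi$ is valid in $\G$ amounts to showing that, for every linear model $M = (W,\leq,w_0,A,(P^W))$, we have $w_0 \forces \phi \to \psi$. Unfolding the semantic clause for implication, this in turn amounts to showing that for every $w \in W$ with $w \geq w_0$, if $w \forces \phi$ then $w \forces \psi$. The hypothesis of the corollary only supplies this implication when $w = w_0$, so the task is to promote a world $w \geq w_0$ into a base point of some other linear model.

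To do this, fix $M$ and $w \geq w_0$ with $w \forces \phi$, and form the restricted model $M_w$ whose set of worlds is $W_w := \{w' \in W : w' \geq w\}$, ordered by the restriction of $\leq$, with base point $w$, the same domain $A$, and each $P^{W_w}$ the restriction of $P^W$ to $W_w$. Since $\leq$ is total on $W$ it is total on $W_w$, so $M_w$ is again a linear model.

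The key routine step is to verify by induction on the complexity of formulas that, for every $w' \in W_w$ and every assignment $\a$, we have $w', \a \forces_{M_w} \chi$ if and only if $w', \a \forces_M \chi$. This works because each semantic clause only quantifies over worlds $w'' \geq w'$ (in the cases of $\to$, $\neg$, $\forall$) or over elements of the unchanged domain $A$ (in the cases of $\exists$, $\forall$), and $W_w$ is upward closed in $W$; the atomic and propositional cases are immediate. I expect this induction to be the only real content of the proof, and it is entirely mechanical.

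Having established this preservation, we conclude: $w = w_0^{M_w} \forces_{M_w} \phi$ because $w \forces_M \phi$, so by the hypothesis of the corollary applied to the linear model $M_w$ we obtain $w \forces_{M_w} \psi$, which by preservation yields $w \forces_M \psi$. This discharges the implication at $w$, so $w_0 \forces_M \phi \to \psi$, and since $M$ was an arbitrary linear model, Theorem~\ref{thm:completeness} gives validity of $\phi \to \psi$ in $\G$.
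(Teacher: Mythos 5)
Your proposal is correct and follows essentially the same route as the paper: restrict $M$ to the upward-closed set ${\uparrow}w$, observe that the restriction is a linear model with base point $w$ in which $\phi$ is still forced, apply the hypothesis, and unwind the semantic clause for $\to$. The only difference is that you spell out the (routine) induction showing forcing is preserved under passing to the generated submodel, which the paper leaves implicit.
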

\begin{proof}
Let $M$ be a linear model with base point $w_0$. Let $w \geq w_0$ be an arbitrary point in which $\phi$ is forced. Then the restriction of $M$ to a model $M'$ on the set ${\uparrow} w$ is a linear model with base point $w$, and $\phi$ is still forced in $w$ in $M'$. By assumption, $w \forces \psi$. Thus, by definition of the semantic clause for $\to$, $w_0 \forces \phi \to \psi$. Since $M$ was arbitrary, $\phi \to \psi$ is valid in $\G$ by Theorem~\ref{thm:completeness}.
\end{proof}

\section{Interpolant}
Given this completeness theorem, we will now establish that $\Gamma \to \Theta$ and $\Theta \to \Delta$ are valid in $\G$ by checking that these two formulas hold in all linear models. In fact, we will see that $\Theta \to \Delta$ holds in all models, and is hence valid in $\CD$. By the main result of \cite{MOU13}, $\Gamma \to \Theta$ is not valid in $\CD$.

We recall an important lemma from \cite{MOU13}. It characterises the validity of  second-order formulas $\exists R \Gamma$ and $\forall S \Delta$ on a model in the language $\{P,Q\}$ as first-order properties of that model. In what follows, if $M = (W,\leq,w_0,A,(P^W)_{P \in \mathcal{L}})$ is a model over a language $\mathcal{L}$, $R$ is a symbol not in $\mathcal{L}$, and $R^W \colon W \to \mathcal{P}(A)$ is order-preserving, then we will write $(M,R^W)$ for the expanded model $M' = (W,\leq,w_0,A,(P^W)_{P \in \mathcal{L} \cup \{R\}})$.
\begin{lemma}\label{lem:fo-char}
Let $M = (W,\leq,w_0,A,P^W,Q^W)$ be a model over $\mathcal{L} = \{P,Q\}$ with base point $w_0$.
\begin{enumerate}
\item The following are equivalent:
\begin{enumerate}
\item There exists order-preserving $R^W \colon W \to \mathcal{P}(A)$ such that, in the expanded model $(M,R^W)$, we have $w_0 \forces \Gamma$;
\item For every $w \in W$, there exists $a \in A$ such that $w_0 \forces Pa$ and $w \not\forces Qa$.
\end{enumerate}
\item The following are equivalent:
\begin{enumerate}
\item For every order preserving $S^W \colon W \to \mathcal{P}(A)$, in the expanded model $(M,S^W)$, we have $w_0 \forces \Delta$;
\item For every $w \in W$, there exists $a \in A$ such that $w \forces Pa$ and $w\not\forces Qa$.
\end{enumerate}
\end{enumerate}
\end{lemma}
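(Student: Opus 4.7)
The plan is to prove each biconditional by unpacking the forcing clauses in the ``easy'' direction and by explicitly constructing an interpretation of the missing predicate symbol in the ``hard'' direction. Throughout I will rely on persistence of $P^W$ and $Q^W$ and on the constancy of the domain $A$.

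For part (1), direction (a)$\Rightarrow$(b) is a straightforward computation. Given $R^W$ witnessing $w_0 \forces \Gamma$ and any $w \geq w_0$, I use $w_0 \forces \neg \forall x\, Rx$ together with persistence to pick $c \in A$ with $w \not\forces Rc$, and then apply the universal conjunct of $\Gamma$ at the world $w_0$ with $x := c$ to obtain $a$ with $w_0 \forces Pa$ and $w_0 \forces Qa \to Rc$; evaluating this implication at $w$ and using $w \not\forces Rc$ forces $w \not\forces Qa$. For (b)$\Rightarrow$(a) I propose the interpretation $R^W(w) := (A \setminus P^W(w_0)) \cup Q^W(w)$, which is order-preserving because $Q^W$ is and $A\setminus P^W(w_0)$ is constant in $w$. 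The clause $w_0 \forces \neg \forall x\, Rx$ then becomes literally the statement (b). To verify the universal conjunct of $\Gamma$, I fix $w \geq w_0$ and $b \in A$ and split cases: if $b \notin P^W(w_0)$ then $Rb$ is forced at every world, so any $a \in P^W(w_0)$ (one exists by (b) at $w_0$) yields $v \forces Pa \wedge (Qa \to Rb)$ trivially; if $b \in P^W(w_0)$, the diagonal choice $a := b$ works, since any $w' \geq w$ forcing $Qb$ automatically has $b \in Q^W(w') \subseteq R^W(w')$.

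For part (2), direction (b)$\Rightarrow$(a) is immediate: for any $S^W$ and any $v \geq w_0$ forcing $\forall x(Px \to (Qx \vee S))$, apply (b) at $v$ to obtain $a$ with $v \forces Pa$ and $v \not\forces Qa$; instantiating the universal gives $v \forces Qa \vee S$ and hence $v \forces S$. For (a)$\Rightarrow$(b), fix $w$ and take $S^W := \{v : v \not\leq w\}$, which is upward closed and satisfies $w \not\forces S$. Applying (a) at the world $w$ must refute the antecedent of $\Delta$ there, producing $v' \geq w$ and $b \in A$ with $v' \forces Pb$, $v' \not\forces Qb$, and $v' \not\forces S$; the last condition forces $v' \leq w$, so $v' \equiv w$ in the quasi-order, and persistence transports the two atomic facts down to $w$.

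The main obstacle is the choice of $R^W$ in part (1)(b)$\Rightarrow$(a): the interpretation must simultaneously be large enough so that the universal conjunct of $\Gamma$ is forced at every $v \geq w_0$, and small enough so that $\forall x\, Rx$ is refuted at every such world. The formula $(A \setminus P^W(w_0)) \cup Q^W(w)$ threads this needle by making $Rb$ trivially true whenever $Pb$ already fails at $w_0$, and otherwise tying $Rb$ to $Qb$ so that the diagonal witness $a := b$ can be used uniformly; this is the step where the constancy of $P^W(w_0)$ across worlds and the order-preservation of $Q^W$ both become essential.
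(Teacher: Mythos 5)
Your proof is correct. Note that the paper itself does not prove this lemma at all --- its ``proof'' consists of the single line ``See \cite[Lemma 4.2]{MOU13}'' --- so what you have supplied is a self-contained argument where the paper defers entirely to the reference. All four implications check out: in (1)(a)$\Rightarrow$(b) the negation clause at $w_0$ directly gives $w \not\forces \forall x\, Rx$ for every $w$ (every world lies above the base point, so the appeal to ``persistence'' here is superfluous but harmless), and the rest follows as you say; in (1)(b)$\Rightarrow$(a) your interpretation $R^W(w) = (A \setminus P^W(w_0)) \cup Q^W(w)$ is order-preserving, makes $w_0 \forces \neg\forall x\, Rx$ literally equivalent to condition (b), and the case split on whether $b \in P^W(w_0)$ (using the diagonal witness $a := b$ in the nontrivial case) correctly verifies the first conjunct of $\Gamma$; in (2)(a)$\Rightarrow$(b) the up-set $S^W = \{v : v \not\leq w\}$ does force the witness $v'$ refuting the antecedent of $\Delta$ to satisfy $v' \leq w$, hence $v' \equiv w$, and persistence in both directions along the equivalence transfers $Pb$ and the failure of $Qb$ to $w$. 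One small remark: the lemma as stated types $S^W$ as a map $W \to \mathcal{P}(A)$, i.e.\ as a unary predicate, whereas $S$ occurs in $\Delta$ without an argument; your reading of $S^W$ as an upward-closed subset of $W$ is the correct one for a nullary symbol, and your argument is insensitive to this discrepancy in the statement.
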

\begin{proof}
See \cite[Lemma 4.2]{MOU13}.
\end{proof}

We are now ready to prove that $\Theta$ is an interpolant.
\begin{theorem}\label{thm:main}
Let $\Gamma, \Theta$ and $\Delta$ be the formulas defined above.

\begin{enumerate}
\item The implication $\Gamma \to \Theta$ is valid in $\G$.
\item The implication $\Theta \to \Delta$ is valid in $\CD$.
\end{enumerate}
\end{theorem}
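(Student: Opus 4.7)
Both parts are semantic arguments in constant-domain Kripke models; for part~1, Corollary~\ref{cor:completeness} lets me restrict to linear models. The main semantic input is Lemma~\ref{lem:fo-char}, which reduces the behaviour of $\Gamma$ on a $\{P,Q\}$-model to a simple first-order condition on its frame.

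For part~1, the plan is as follows. Start with a linear model $M$ over $\{P,Q,R\}$ with base point $w_0$ such that $w_0 \forces \Gamma$, and pass to the $\{P,Q\}$-reduct; Lemma~\ref{lem:fo-char}(1) then gives the condition $(\ast)$: for every $w \in W$ there exists $a \in A$ with $w_0 \forces Pa$ and $w \not\forces Qa$. The second conjunct of $\Theta$ is then easy: if some $w' \geq w_0$ forced $\forall x(\neg Px \vee Qx)$, apply $(\ast)$ at $w'$ to obtain an $a$ with $w_0 \forces Pa$ (so $w' \forces Pa$ by persistence, hence $w' \not\forces \neg Pa$) and $w' \not\forces Qa$, contradicting the disjunction instance at $a$.

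For the first conjunct of $\Theta$, fix $a \in A$ and split on whether $Pa$ holds at some world $\geq w_0$. If not, then $w_0 \forces \neg Pa$ and that disjunct suffices. Otherwise, choose $w^* \geq w_0$ with $w^* \forces Pa$ and apply $(\ast)$ at $w^*$ to get $b$ with $w_0 \forces Pb$ and $w^* \not\forces Qb$. I claim this $b$ witnesses the existential $\exists y(Py \wedge (Qy \to Pa))$ at $w_0$: to verify $w_0 \forces Qb \to Pa$, pick any $w \geq w_0$ with $w \forces Qb$; by linearity $w$ and $w^*$ are comparable, the case $w \leq w^*$ is ruled out by persistence of $Qb$, so $w^* \leq w$ and persistence of $Pa$ gives $w \forces Pa$. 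This linearity step is the heart of the argument, and it matches the authors' observation that a $\CD$-countermodel must fail to be linear; it is the place I expect to take the most care.

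For part~2, no linearity is needed and no appeal to Lemma~\ref{lem:fo-char}(2) is required. Let $w \geq w_0$ be any world forcing both $\Theta$ and the antecedent $\forall x(Px \to (Qx \vee S))$; the goal is $w \forces S$. The second conjunct of $\Theta$ applied at $w$ yields some $a$ with $w \not\forces \neg Pa$ and $w \not\forces Qa$, and the first conjunct instantiated at $a$ must then deliver its right disjunct, producing $b$ with $w \forces Pb$ and $w \forces Qb \to Pa$. Applying the antecedent at $b$ gives $w \forces Qb \vee S$; in the $Qb$ branch, the implication $Qb \to Pa$ forces $w \forces Pa$, a second use of the antecedent at $a$ yields $w \forces Qa \vee S$, and $w \not\forces Qa$ then pins down $w \forces S$, as required.
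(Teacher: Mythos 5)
Your proof is correct and, for part~1, essentially identical to the paper's: the same reduction via Corollary~\ref{cor:completeness} and Lemma~\ref{lem:fo-char}(1), the same witness $b$ obtained from the condition at a world $w_1$ (your $w^*$) where $Pa$ holds, and the same use of linearity to get $w_0 \forces Qb \to Pa$ --- the only cosmetic difference being that you invoke totality of the order directly (comparability of $w$ and $w^*$ plus persistence), whereas the paper invokes the forced scheme $(Pa \to Qb) \vee (Qb \to Pa)$ and rules out the first disjunct; the paper's definition of linear model licenses both readings. For part~2 you deviate slightly: the paper routes the argument through Lemma~\ref{lem:fo-char}(2), reducing $w_0 \forces \Delta$ to the first-order condition that every world has a $c$ with $w \forces Pc$ and $w \not\forces Qc$, and then runs the same case split on $Qb$ to produce $c \in \{a, b\}$; you instead verify $\Delta$ directly from the semantics by feeding $b$ and then $a$ into the antecedent $\forall x(Px \to (Qx \vee S))$. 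Your version is self-contained (no second appeal to the lemma) at the cost of not isolating the reusable first-order characterisation; both arguments are the same case analysis in different clothing, and both are valid in arbitrary (not necessarily linear) models, as required for validity in $\CD$.
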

\begin{proof}
1. We establish the sufficient condition from  Corollary~\ref{cor:completeness}. Let $M$ be a linear model over $\{P,Q,R\}$ with base point $w_0$ such that $w_0 \forces \Gamma$. We need to show that $w_0 \forces \Theta$, that is: (a) $w_0 \forces \forall x (\neg Px \vee \exists y (Py \wedge (Qy \to Px)))$ and (b) $w_0 \forces  \neg \forall x (\neg Px \vee Qx)$.  

\begin{enumerate}
\item[(a)] Let $a \in A$ be arbitrary. We show that $w_0 \forces \neg Pa \vee \exists y (Py \wedge (Qy \to Pa))$. If $w_0 \forces \neg Pa$, we are done immediately. Suppose that $w_0 \not\forces \neg Pa$. We show that $w_0 \forces \exists y(Py \wedge (Qy \to Pa))$. Since $w_0 \not\forces \neg Pa$, pick $w_1 \geq w_0$ such that $w_1 \forces Pa$. By Lemma~\ref{lem:fo-char}.1, pick $b$ such that $w_0 \forces Pb$ and $w_1 \not\forces Qb$. Then $w_0 \not\forces Pa \to Qb$, since $w_1 \geq w_0$ and $w_1 \forces Pa$, but $w_1 \not\forces Qb$. Since $M$ is a linear model $w_0 \forces (Pa \to Qb) \vee (Qb \to Pa)$, so we conclude that $w_0 \forces Qb \to Pa$. Since also $w_0 \forces Pb$, we have proved that $w_0 \forces Pb \wedge (Qb \to Pa)$, so that $w_0 \forces \exists y(Py \wedge (Qy \to Pa))$.

\item[(b)] Let $w \in W$ be arbitrary. By Lemma~\ref{lem:fo-char}.1, pick $a$ such that $w_0 \forces Pa$ and $w \not\forces Qa$. Then, since $w_0 \leq w$, we have $w \forces Pa$. Thus, $w \not\forces \neg Pa \vee Qa$. Hence, $w \not\forces \forall x (\neg Px \vee Qx)$. Since $w$ was arbitrary, we get that $w_0 \forces \neg \forall x (\neg Px \vee Qx)$.
\end{enumerate}

2. We establish the sufficient condition from the analogous version of Corollary~\ref{cor:completeness} for $\CD$. Let $M$ be a model over $\{P,Q,S\}$ with base point $w_0$ such that $w_0 \forces \Theta$. We need to show that $w_0 \forces \Delta$. By Lemma~\ref{lem:fo-char}.2, it suffices to prove that:
\begin{align}\tag{*}
\text{for every $w$, there exists $c \in A$ such that $w \forces Pc$ and $w\not\forces Qc$.}
\end{align}
Let $w$ be arbitrary. Since $w_0 \forces \neg \forall x(\neg Px \vee Qx)$, in particular we have that $w \not\forces \forall x (\neg Px \vee Qx)$. Pick $a \in A$ such that $w \not\forces \neg Pa \vee Qa$, that is, $w \not\forces \neg Pa$ and $w \not\forces Qa$. Since $w_0 \forces \Theta$, in particular $w \forces \Theta$, and instantiating the first conjunct with $x = a$, we see that $w \forces \neg Pa \vee \exists y (Py \wedge (Qy \to Pa))$. Since $w \not\forces \neg Pa$, we conclude that $w \forces \exists y (Py \wedge (Qy \to Pa))$. Pick $b \in A$ such that $w \forces Pb$ and $w \forces Qb \to Pa$. We distinguish two cases: (i) $w \forces Qb$. In this case, since $w \forces Qb \to Pa$, we have $w \forces Pa$. Since also $w \not\forces Qa$, we can take $c := a$ in (*). (ii) $w \not\forces Qb$. In this case, we can take $c := b$ in (*).  This establishes (*), so that $w_0 \forces \Delta$.
\end{proof}

\subsection*{Acknowledgement}
The authors gratefully acknowledge the support of the project DuaLL, funded by the European Research Council under the Horizon 2020 program.



\end{document}